\documentclass[11pt]{article}%
\usepackage{amsmath}
\usepackage{amssymb}
\usepackage{amsfonts}
\usepackage{graphicx}%
\setcounter{MaxMatrixCols}{30}
\providecommand{\U}[1]{\protect\rule{.1in}{.1in}}
\newtheorem{theorem}{Theorem}[section]

\newtheorem{corollary}[theorem]{Corollary}

\newtheorem{definition}[theorem]{Definition}

\newtheorem{lemma}[theorem]{Lemma}

\newtheorem{remark}[theorem]{Remark}

\newenvironment{proof}[1][Proof]{\textbf{#1.} }{\ \rule{0.5em}{0.5em}}
\newdimen\dummy
\dummy=\oddsidemargin
\addtolength{\dummy}{72pt}
\marginparwidth=.5\dummy
\marginparsep=.1\dummy
\ifx\pdfoutput\relax\let\pdfoutput=\undefined\fi
\newcount\msipdfoutput
\ifx\pdfoutput\undefined\else
\ifcase\pdfoutput\else
\msipdfoutput=1
\ifx\paperwidth\undefined\else
\ifdim\paperheight=0pt\relax\else\pdfpageheight\paperheight\fi
\ifdim\paperwidth=0pt\relax\else\pdfpagewidth\paperwidth\fi
\fi\fi\fi
\begin{document}

\title{Coverings of graded pointed Hopf algebras}
\author{Esther Beneish and William Chin\\University of Wisconsin-Parkside, Kenosha Wisconsin \\DePaul University, Chicago, Illinois}
\maketitle

\begin{abstract}
We introduce the concept of a covering of a graded pointed Hopf algebra. The
theory developed shows that coverings of a bosonized Nichols algebra can be
concretely expressed by biproducts using a quotient of the universal coalgebra
covering group of the Nichols algebra. If there are enough quadratic
relations, the universal coalgebra covering is given by the bosonization by
the enveloping group of the underlying rack.

\end{abstract}

\section{Introduction}

Nichols algebras play a crucial role in the classification results for pointed
Hopf algebras over abelian groups, which vastly generalize the theory of
quantized enveloping algebras. They appear in several recent investigations of
pointed Hopf algebras over nonabelian groups. Nichols algebras are seen to be
fundamental objects, appearing in the study of the cohomology of flag
manifolds. See e.g., \cite{AG2003a},\cite{AS2002},\cite{GHV2011}%
,\cite{HLV2012},\cite{FK99} and references therein.

A Nichols algebra $B(V)$ is constructed from a braided vector space $V,$ where
$V$ in turn is, in the most studied cases, a Yetter-Drinfeld module over some
group $G$. The Nichols algebra depends only on the braiding $c:V\otimes
V\rightarrow V\otimes V$ and there may be many groups yielding the braiding.
In this paper, we study groups that can arise given a fixed Yetter-Drinfeld
module $V\in%
\genfrac{}{}{0pt}{}{G}{G}%
\mathcal{YD}$.

We assume that $V$ is a link-indecomposable Yetter-Drinfeld module over a
group $G$, and that $X$ is the corresponding rack with cocycle $q:X\times
X\rightarrow\mathbb{\Bbbk}^{\times}$. $V\ $also takes the form $\oplus
_{i}M(g_{i},\rho_{i})\cong(\mathbb{\Bbbk}X,c^{q})$ as braided vector spaces.
That is, we are assuming that $V$ is of \textit{rack type} with braiding
$c=c^{q}:V\otimes V\rightarrow V\otimes V.$ Here $g_{i}\in G$ and $\rho_{i}$
is a one-dimensional representation of the centralizer of $g_{i}$ in $G.$ The
equivalence of the two constructions of $V$ is explained by \cite[Theorem
4.14]{AG2003a} (in greater generality)$.$

The theory of coalgebra coverings in \cite{Chin2010} yields indecomposable
coalgebra coverings of $B(V)$. Such coverings take the form $B(V)\rtimes\Bbbk
G\rightarrow B(V)$ where $G$ is a homomorphic image of the universal coalgebra
covering group $\tilde{G}$ for $B\left(  V\right)  $ and $\rtimes$ is the
smash coproduct (or "co-smash product") coalgebra. Let $G_{X}$ denote the
enveloping group of the rack $X$ (see \cite{AG2003a},\cite{GHV2011}). We have
in general for a braiding of rack type that $V\in%
\genfrac{}{}{0pt}{}{\Bbbk G_{X}}{\Bbbk G_{X}}%
\mathcal{YD}$ and therefore there is a surjection $\tilde{G}\rightarrow G_{X}$
and a corresponding coalgebra surjection $B(V)\rtimes\Bbbk\tilde{G}\rightarrow
B(V)\rtimes\Bbbk G_{X}.$ We show in Corollary \ref{uni} that a certain
quotient $B(V)\#\hat{G}$ of $B(V)\#\tilde{G}$ serves as the universal Hopf
covering of $B(V)\#G$. Thus we have a \textit{Hopf algebra covering} of the
graded Hopf algebra the sense exhibited in \cite{Lentner2013} arising from a
central extension
\[
1\rightarrow Z\rightarrow\hat{G}\rightarrow G\rightarrow1.
\]
It is known by \cite[Lemma 3.4]{AFGV2011} that $G_{X}$ also is universal for
$V$ so we have in fact that $\hat{G}=G_{X}.$

The corresponding rack braiding $c:X\times X\rightarrow X\times X$ decomposes
into $c$-orbits $\mathcal{O=O}_{x,y}$. Recall that with $c=c^{q}:V\otimes
V\rightarrow V\otimes V,$ $\ker(1+c)$ defines the quadratic term $B(V)(2)$.
When $\ker(1+c^{{}})\cap V_{\mathcal{O}}\mathcal{\neq}0$ for each nontrivial
$c$-orbit $\mathcal{O}$, we say that $B(V)$ has \textit{a full set of
quadratic relations. (}We are ignoring the orbits of the form $\{(x,x)\}.)$ We
show in Theorem \ref{full} that, for a Nichols algebra with a full set of
quadratic relations, $G_{X}=\tilde{G}.$ Thus $B(V)\#G_{X}\rightarrow B(V)$ is
the universal coalgebra covering, which factors through the Hopf algebra
surjection $B(V)\#\Bbbk G_{X}\rightarrow B(V)\#\Bbbk G$. This means that every
group $G$ such that $B(V)\rtimes\Bbbk G$ is an indecomposable coalgebra arises
as a homomorphic image of $G_{X}.$ We have thus a universal Hopf covering for
the Nichols algebra $B(V).$ The use of $G_{X}$ as the "principal" grading
group is mentioned in \cite[Lemma 3.4]{AFGV2011} and suggested in
\cite{GaV2014}.

In case $G_{X}=\tilde{G},$ we remark that the relations in the definition of
$\tilde{G}$ of degree greater than two are superfluous. In all known examples
with finite dimensional $B(V)$ we find that $G_{X}=\tilde{G},$ and we
conjecture that this is always the case.

To compare with \cite{GHV2011}, recall that $B(V)$ is said to have
\textit{many quadratic relations} if the $\dim\ker(1+c)\geq d(d-1)/2$ where
$d=\dim V.$ The two notions pertaining to quadratic relations do not appear to
be comparable, see for example \ref{FK Ex}.

We are interested in finite racks $X$ that are unions of conjugacy classes of
the group $G$ and we assume that $X$ generates $G$. In this latter case the
Yetter-Drinfeld module is said to be \textit{link-indecomposable}, so the
covering Hopf algebras will also be link-indecomposable (i.e. indecomposable
as coalgebra). We provide families of link-indecomposable Hopf algebras
corresponding to the covering Hopf algebras of finite-dimensional Nichols
algebras. These Hopf algebras are finite-dimensional when the covering group
is finite. This addresses the question posed in \cite{Montgomery95} by
including all finite dimensional covering Hopf algebras of known examples of
link indecomposable bosonized Nichols algebras. Specifically, the families of
finite-dimensional Hopf algebras produced arise from the finite homomorphic
images $G_{X}$ by finite index subgroups of its center $Z(G_{X}).$ Some of
these examples are given in \cite[Section 6]{AG2003a}.

Our main references for racks, Nichols algebras, and pointed Hopf algebras are
\cite{AG2003a},\cite{AS2002}; also see \cite{GHV2011}, \cite{HLV2012}.

It would be interesting to try to remove the condition that $\dim\rho=1$ from
our results. Another direction would be to examine more general situations in
which $\tilde{G}=G_{X}.$ It might also be of interest to extend results here
to liftings to nongraded pointed Hopf algebras.

\section{Path coalgebras and pointed coalgebras}

We refer to \cite{Chin2004},\cite{Chin2010} for basics of pointed coalgebras
and path coalgebras. The \textbf{path coalgebra} $\Bbbk Q$ of a quiver $Q$ is
defined to be the span of all paths in $Q$ with coalgebra structure
\[%
\begin{array}
[c]{c}%
\Delta(p)=\sum_{p=p_{2}p_{1}}p_{2}\otimes p_{1}+t(p)\otimes p+p\otimes s(p)\\
\varepsilon(p)=\delta_{|p|,0}%
\end{array}
\]
where $p_{2}p_{1}$ is the concatenation $a_{t}a_{t-1}...a_{s+1}a_{s}%
...a_{1}\,$of the subpaths $p_{2}=a_{t}a_{t-1}...a_{s+1}$ and $p_{1}%
=a_{s}...a_{1}$ ($a_{i}\in Q_{0}$)$.$ Here $|p|=t$ denotes the length of $p$
and the starting vertex of $a_{i+1}\,$ is required to be the end of $a_{i}.$
Thus the vertices $Q_{0}$ are group-like elements, and if $a$ is an arrow
$g\leftarrow h$, with $g,h\in Q_{0},$ then $a$ is a $(g,h)$- skew primitive,
i.e., $\Delta a=g\otimes a+a\otimes h.$ It follows that $\Bbbk Q$ is pointed
with coradical $(kQ)_{0}=kQ_{0}\,$and the degree one term of the coradical
filtration is $(\Bbbk Q)_{1}=\Bbbk Q_{0}\oplus\Bbbk Q_{1}.$ Moreover the
coradical grading $\Bbbk Q=\bigoplus\limits_{n\geq0}\Bbbk Q_{n}$ is given by
path length. The path coalgebra may be identified with the cotensor coalgebra
$C(\Bbbk Q_{1})=\oplus_{n\geq0}(\Bbbk Q_{1})^{\Box n}$ of the $\Bbbk Q_{0}%
$-bicomodule $\Bbbk Q_{1},$ cf. \cite{Nichols78}.

Define the (\textit{Gabriel- }or\textit{\ Ext-}) \textit{quiver} of a pointed
coalgebra $B$ to be the directed graph $Q=Q_{B}$ with vertices $Q_{0} $
corresponding to group-likes and $\mathrm{\dim}_{\Bbbk}P_{h,g}$ arrows from
$h$ to $g$, for all $h,g\in Q_{0}$. We will view $B$ as a subcoalgebra of the
path coalgebra of its Gabriel quiver, with the same degree one coradical term.
If $B$ has a unique group-like element with a space of primitives of dimension
$n$, then the quiver is the $n$-loop quiver. The path coalgebra is the cofree
pointed irreducible coalgebra (and the path algebra is the free algebra on $n$ generators).

The indecomposable components ("blocks") of $B$ are coalgebras that are the
direct sums of injective indecomposables having socles in a given graph
component. Therefore $B$ is indecomposable as a coalgebra if and only if it is
"link-indecomposable", if and only if its quiver is connected as a graph. In
\cite{Montgomery95} it is shown that a pointed coalgebra is a crossed product
over the principal block subcoalgebra, i.e. the one containing $1_{G(B)}.$

\section{Coverings}

We first summarize results from \cite{Chin2010} concerning coverings of
pointed coalgebras. An analogous version for bound quivers finite-dimensional
algebras can be found in \cite{Green83} and \cite{Martinez83}.

Let $B\subseteq\Bbbk Q$ be a subspace. Let $b=\sum_{i\in I}\lambda_{i}p_{i}\in
B(x,y)$ with $x,y\in Q_{0}$ and distinct paths $p_{i}.$ We say that $b$ is a
\textit{minimal element\ }of $B$ if $\sum_{i\in I^{\prime}}\lambda_{i}%
p_{i}\notin B(x,y)$ for every nonempty proper subset $I^{\prime}\subset I$ and
$|I|\geq2.$ Clearly every element of $B$ is a linear combination of paths and
minimal elements. Let $\min(B)$ denote the set of minimal elements of $B$.

For an admissible ideal $I$ of a path algebra, let $A=\Bbbk^{Q}/I$ denote an
admissible quotient of the path algebra $\Bbbk^{Q}$ with ideal of relations
$I$. Let $B\subseteq\Bbbk Q$ be an admissible subcoalgebra of the path
coalgebra $\Bbbk Q$.

Fix a base vertex $x_{0}\in Q_{0}$. We define a symmetric relation $\sim$ on
paths by declaring $p\sim q$ if there is a minimal element $b=\sum_{i\in
I}\lambda_{i}p_{i}\in B(x,y)$ where the $p_{i}$ are distinct paths,
$\lambda_{i}\in\Bbbk$, $x,y\in Q_{0}$ and $p=p_{1}$, $q=p_{2}.$ We define
$N(B,x_{0})$ to be the subgroup of $\pi_{1}(B,x_{0})$ generated by equivalence
(homotopy) classes of walks $w^{-1}p^{-1}qw$ where $p,q$ are paths in $Q(x,y)$
with $p\sim q$ and $w$ is a walk from $x_{0}$ to $x.$

It is easy to see that $N(B,x_{0})$ is a normal subgroup of $\pi_{1}%
(B,x_{0}).$ Explicitly, if $w^{-1}p^{-1}qw$ is closed walk as above and
$[v]\in\pi_{1}(B,x_{0})$ where $v$ is a closed walk at $x_{0}$, then $wv$ is a
path from $x_{0}$ to $x$ and $[v^{-1}w^{-1}p^{-1}qwv]=[(wv)^{-1}p^{-1}q\left(
wv\right)  ]\in N(B,x_{0}).$

Consider a Galois covering $F:\tilde{Q}\rightarrow Q$ of quivers with
automorphism group $G$ and lifting $L$. Let $\tilde{B}$ denote the $\Bbbk
$-span of $\{L(b)|L$ a lifting, $b\in B$ a minimal element or a path\}. We say
that the restriction $F:\tilde{B}\rightarrow B$ is a \textit{Galois coalgebra
covering} if every minimal element of $B$ can be lifted to $\tilde{B}$ in the
following sense: for every minimal element \ $b\in B(x,y)$ with $x,y\in Q_{0}$
and $\tilde{x}\in F^{-1}(x),$ there exists $\tilde{y}\in\tilde{Q}_{0}$ and a
minimal element $\tilde{b}\in\tilde{B}(\tilde{x},\tilde{y})$ such that
$F(\tilde{b})=b.$ All coverings in this paper are assumed to be Galois.

Let $B\subseteq\Bbbk Q$ be a pointed coalgebra. Then there exists a Galois
coalgebra covering $F:\tilde{B}\rightarrow B$ $,$ the \textit{universal
coalgebra covering} of $B\subseteq\Bbbk Q,$ such that for every Galois
coalgebra covering $F^{\prime}:B^{\prime}\rightarrow B$, there exists a Galois
coalgebra covering $E:\tilde{B}\rightarrow B^{\prime}$ such that the following
diagram commutes.%

\[%
\begin{array}
[c]{ccccc}%
\tilde{B} &  & \overset{E}{\xrightarrow{\hspace*{.75cm}}} &  & B^{\prime}\\
& \underset{F}{\searrow} &  & \underset{F^{\prime}}{\swarrow} & \\
&  & B &  &
\end{array}
\]

The fundamental example of a covering is given as follows. Let $B\subseteq
\Bbbk Q$ be a homogenous admissible subcoalgebra with respect to the grading
given by an arrow weighting $\delta:Q_{1}\rightarrow G$. If $b=\sum_{i\in
I}\lambda_{i}p_{i}\in B(x,y)$ is a minimal element, then it is necessarily
homogeneous in the $G$-grading. Consider the canonical map $F:\Bbbk
Q\rtimes\Bbbk G\rightarrow\Bbbk Q$ defined by $F(p\rtimes g)=p$ and consider
the restriction to $B\rtimes\Bbbk G\rightarrow B.$ Then under the
identification of $\Bbbk Q\rtimes\Bbbk G$ with $\Bbbk\tilde{Q}$ we easily see
that $\tilde{B}=B\rtimes\Bbbk G.$ The liftings of minimal element $b\in B$ are
given by $b\rtimes g$ with $g\in G.$

\begin{theorem}
[\cite{Chin2010}]\label{Cov}The following are equivalent for a subcoalgebra
$B\subseteq\Bbbk Q $ and Galois quiver covering $F:\tilde{Q}\rightarrow
Q.$\newline(a) $B$ is a homogeneous subcoalgebra of $\Bbbk Q.$\newline(b)
$N(B,x_{0})\subseteq F_{\ast}(\pi_{1}(\tilde{Q},\tilde{x}_{0}))$ for all
$x_{0}\in Q,$ $\tilde{x}_{0}\in\tilde{Q}$ with $F(\tilde{x}_{0})=x_{0}%
.$\newline(c) $F:\tilde{B}\rightarrow B$ is a Galois coalgebra
covering.\newline(d) $B$ is a homogenous subcoalgebra of $\Bbbk Q$ and the
grading is connected.
\end{theorem}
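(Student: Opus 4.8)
The plan is to reduce everything to the arrow weighting $\delta\colon Q_{1}\to G$ determined by the Galois covering $F$. Fixing a lifting $L$, let $\delta(a)\in G$ be the deck transformation relating the two endpoints of $L(a)$ lying over $t(a)$, extend $\delta$ multiplicatively to all walks (trivial on vertices, $\delta(a^{-1})=\delta(a)^{-1}$), and write $\delta_{\ast}\colon\pi_{1}(Q,x_{0})\to G$ for the induced homomorphism. Under the identification $\Bbbk Q\rtimes\Bbbk G\cong\Bbbk\tilde{Q}$ from the fundamental example, the lift of a path $p\in Q(x,y)$ starting over a vertex $\tilde{x}$ ends over $y$ in the sheet prescribed by $\delta(p)$; hence two paths $p,q\in Q(x,y)$ lift, from a common starting vertex $\tilde{x}$, to paths with the \emph{same} endpoint precisely when $\delta(p)=\delta(q)$. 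One also has $F_{\ast}\big(\pi_{1}(\tilde{Q},\tilde{x}_{0})\big)=\ker\delta_{\ast}$, and ``the grading is connected'' amounts to $\delta_{\ast}$ being surjective, equivalently to $\tilde{Q}$ being connected.

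First I would prove (a)$\Leftrightarrow$(b). The crux is that $B$ is a homogeneous (i.e.\ $G$-graded) subspace of $\Bbbk Q$ if and only if every minimal element $b=\sum_{i\in I}\lambda_{i}p_{i}\in B(x,y)$ is $\delta$-homogeneous, i.e.\ $\delta(p_{i})=\delta(p_{j})$ for all $i,j\in I$. Indeed $B$ is spanned by paths (automatically homogeneous) together with minimal elements, so a graded $B$ forces each minimal element to be homogeneous --- otherwise one of its nonzero $\delta$-homogeneous components would be a proper nonempty subsum lying in $B(x,y)$, contradicting minimality --- and conversely. Now $\delta(p_{i})=\delta(p_{j})$ holds iff $\delta$ is trivial on $w^{-1}p_{i}^{-1}p_{j}w$ for one, hence every, walk $w$ from $x_{0}$ to $x$, i.e.\ iff $[w^{-1}p_{i}^{-1}p_{j}w]\in\ker\delta_{\ast}$. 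Since $p_{i}\sim p_{j}$ for all $i,j$ appearing in a minimal element, and the classes $[w^{-1}p^{-1}qw]$ with $p\sim q$ generate $N(B,x_{0})$, this is exactly $N(B,x_{0})\subseteq\ker\delta_{\ast}=F_{\ast}(\pi_{1}(\tilde{Q},\tilde{x}_{0}))$. Independence of the choice of basepoints follows because $N(B,x_{0})$ is normal and the subgroups $F_{\ast}(\pi_{1}(\tilde{Q},\tilde{x}_{0}))$ lying over a fixed $x_{0}$ are conjugate, transported compatibly along walks as $x_{0}$ varies.

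Next, (a)$\Rightarrow$(c): with $B$ homogeneous, the fundamental example identifies $\tilde{B}$ with $B\rtimes\Bbbk G\subseteq\Bbbk Q\rtimes\Bbbk G=\Bbbk\tilde{Q}$, the liftings of a minimal $b\in B(x,y)$ being the elements $b\rtimes g$. Homogeneity of $b$ ensures the lifted paths out of a fixed sheet $\tilde{x}$ all terminate at one vertex $\tilde{y}$, so $b\rtimes g$ genuinely lies in $\tilde{B}(\tilde{x},\tilde{y})$, and it is minimal there because $F$ carries any proper nonzero subsum of it to the corresponding subsum of $b$, which cannot lie in $B(x,y)$. Hence every minimal element of $B$ lifts to a minimal element of $\tilde{B}$, which is (c). Conversely, for (c)$\Rightarrow$(a), take a minimal $b=\sum_{i\in I}\lambda_{i}p_{i}\in B(x,y)$ with $|I|\geq2$ and a sheet $\tilde{x}$ over $x$; pick $\tilde{y}$ and a minimal $\tilde{b}\in\tilde{B}(\tilde{x},\tilde{y})$ with $F(\tilde{b})=b$. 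Writing $\tilde{b}$ in the path basis of $\tilde{Q}$, its summands are distinct paths $\tilde{x}\to\tilde{y}$ (distinct because lifts from the common vertex $\tilde{x}$ are unique), so applying $F$ identifies each $p_{i}$ with the image of some path $\tilde{x}\to\tilde{y}$; all such paths carry the same $\delta$-value, whence all $\delta(p_{i})$ agree and $b$ is $\delta$-homogeneous. Since $B$ is spanned by paths and minimal elements, $B$ is then a graded subspace, i.e.\ (a). Finally (d) trivially implies (a), and for (a)$\Rightarrow$(d) one only checks that the grading is connected: this is immediate once $\tilde{Q}$ is connected, which is the relevant case; otherwise one replaces $G$ by $\operatorname{im}\delta_{\ast}$, which does not disturb homogeneity of $B$ and makes the grading connected.

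The step I expect to cost the most is the bridge between liftability of minimal elements and $\delta$-homogeneity underlying (a)$\Leftrightarrow$(c): one must check both that a lift of a minimal element is again \emph{minimal} in $\tilde{B}$, not merely a member of it, and conversely that a minimal element of $\tilde{B}$ over $b$ is necessarily supported on paths sharing a single terminal sheet --- and it is this last fact that pins down $\delta(p_{i})=\delta(p_{j})$. The other delicate point is the basepoint bookkeeping in (b), namely that normality of $N(B,x_{0})$ renders the containment independent of the choices of $x_{0}$ and $\tilde{x}_{0}$.
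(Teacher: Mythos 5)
This theorem is quoted from \cite{Chin2010}; the paper you were given contains no proof of it, so there is nothing internal to compare against. Your argument is the standard one and is essentially sound: encoding the Galois covering by an arrow weighting $\delta\colon Q_{1}\rightarrow G$ with $F_{\ast}(\pi_{1}(\tilde{Q},\tilde{x}_{0}))=\ker\delta_{\ast}$, observing that $B$ is spanned by paths and minimal elements so that homogeneity of $B$ is equivalent to $\delta$-homogeneity of every minimal element, translating the latter into $N(B,x_{0})\subseteq\ker\delta_{\ast}$ via the generators $[w^{-1}p^{-1}qw]$, and then using unique path lifting plus the identification $\tilde{B}\cong B\rtimes\Bbbk G$ (for homogeneous $B$) to go back and forth between $\delta$-homogeneity and liftability of minimal elements; your check that a lift is again \emph{minimal} (a proper subsum upstairs would push down to a forbidden proper subsum in $B(x,y)$) and that a minimal lift forces a common terminal sheet are exactly the right pivots for (a)$\Leftrightarrow$(c). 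The one soft spot is (a)$\Rightarrow$(d): within the theorem the covering $F$ and its group $G$ are fixed, so ``replace $G$ by $\operatorname{im}\delta_{\ast}$'' is not available as stated; the equivalence with (d) is legitimate only under the convention (implicit in \cite{Chin2010}'s setup) that Galois coverings have $\tilde{Q}$ connected, in which case $\delta_{\ast}$ is automatically surjective and the grading connected. You should state that convention explicitly rather than patch it by changing the group; with that said, the core equivalences (a)$\Leftrightarrow$(b)$\Leftrightarrow$(c) are proved correctly.
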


\begin{theorem}
[\cite{Chin2010}]The universal covering of the coalgebra $B\subseteq\Bbbk Q$
is isomorphic to $B\rtimes\Bbbk\tilde{G}\rightarrow B$ where $\tilde{G}%
=\frac{\pi_{1}(\tilde{Q},\tilde{x}_{0})}{N(B,x_{0})},$ $(\tilde{Q},\tilde
{x}_{0})$ is the universal covering quiver of $(Q,x_{0})$ (with base vertices
$\tilde{x}_{0}$ and $x_{0}$), and $B\rtimes\Bbbk\tilde{G}$ is indecomposable
as a coalgebra if $B$ is.
\end{theorem}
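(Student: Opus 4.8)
The plan is to exhibit $B\rtimes\Bbbk\tilde{G}\to B$ as an instance of the fundamental example of a covering (the paragraph preceding Theorem~\ref{Cov}) and then to deduce its universal property by comparing normal subgroups of the fundamental group. Write $G=\pi_{1}(Q,x_{0})$ for the deck group of the universal covering quiver $\tilde{Q}\to Q$ (the free fundamental group of the graph $Q$), fix a spanning tree $T\subseteq Q$, and let $\delta\colon Q_{1}\to G$ be the arrow weighting that is trivial on $T$ and sends each remaining arrow to its associated free generator, so that $\Bbbk Q\rtimes\Bbbk G\cong\Bbbk\tilde{Q}$ as in the fundamental example and $\delta$ induces the identity of $G$ on closed walks at $x_{0}$. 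First I would show that $B$ is a homogeneous subcoalgebra of $\Bbbk Q$ for the induced weighting $\bar{\delta}\colon Q_{1}\to\tilde{G}:=G/N(B,x_{0})$: if $b=\sum_{i\in I}\lambda_{i}p_{i}\in B(x,y)$ is a minimal element with $|I|\geq2$, then $p_{1}\sim p_{i}$ for every $i$, so for any walk $w$ from $x_{0}$ to $x$ the class $[w^{-1}p_{1}^{-1}p_{i}w]$ lies in $N(B,x_{0})$; applying $\delta$ and using that $N(B,x_{0})$ is \emph{normal} in $G$ gives $\delta(p_{1})^{-1}\delta(p_{i})\in N(B,x_{0})$, so the $p_{i}$ all have the same $\bar{\delta}$-degree and $b$ is $\bar{\delta}$-homogeneous. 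Single paths are homogeneous, and a short induction on the number of terms (split off a minimal subsum or a single path and recurse) shows that the $\bar{\delta}$-homogeneous components of any element of $B$ lie in $B$; moreover $\bar{\delta}$ induces the canonical projection $G\twoheadrightarrow\tilde{G}$ on closed walks at $x_{0}$, so the grading is connected. By the fundamental example $\tilde{B}=B\rtimes\Bbbk\tilde{G}$, so Theorem~\ref{Cov} (equivalence of (a) and (c)) shows $F\colon B\rtimes\Bbbk\tilde{G}\to B$ is a Galois coalgebra covering; since the grading is connected the $\tilde{G}$-Galois cover of $Q$ is connected, and as $B\rtimes\Bbbk\tilde{G}$ is admissible its Gabriel quiver equals this cover, whence $B\rtimes\Bbbk\tilde{G}$ is indecomposable whenever $B$ (equivalently $Q$) is.

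For universality, let $F'\colon B'\to B$ be an arbitrary Galois coalgebra covering. By definition it comes from a (connected) Galois quiver covering $Q'\to Q$ with group $G'$ and connected weighting $\delta'\colon Q_{1}\to G'$, and by the fundamental example $B'\cong B\rtimes\Bbbk G'$; standard covering-space theory for graphs gives a surjection $\pi\colon G\twoheadrightarrow G'$ with kernel $N':=F'_{\ast}(\pi_{1}(Q',\tilde{x}'_{0}))$, and Theorem~\ref{Cov}(b), applied to the quiver covering $Q'\to Q$ underlying $F'$, forces $N(B,x_{0})\subseteq N'$. Hence $\pi$ factors through a surjection $\bar{\pi}\colon\tilde{G}=G/N(B,x_{0})\twoheadrightarrow G'$, which induces a coalgebra map $E=\mathrm{id}_{B}\rtimes\Bbbk\bar{\pi}\colon B\rtimes\Bbbk\tilde{G}\to B\rtimes\Bbbk G'\cong B'$. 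One checks that $E$ carries liftings of minimal elements to liftings, so by Theorem~\ref{Cov} it is itself a Galois coalgebra covering (with deck group $N'/N(B,x_{0})$), and $F'E=F$ since both maps send $b\rtimes g\mapsto b$; this is exactly the commuting triangle required of the universal covering.

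I expect the main obstacle to be making the universality argument watertight: in particular the exact dictionary between Galois coalgebra coverings of $B$ and normal subgroups $N'\supseteq N(B,x_{0})$ of $\pi_{1}(Q,x_{0})$ (with covering $B\rtimes\Bbbk(\pi_{1}/N')$), which relies on Theorem~\ref{Cov} to recognize an arbitrary Galois coalgebra covering as the fundamental-example covering of a connected grading, and on checking that the induced group quotient $\bar{\pi}$ really yields a coalgebra covering $E$ with $F'E=F$. The homogeneity computation of the first step, though elementary, also needs care about exactly where the normality of $N(B,x_{0})$ enters and about the inductive reduction to minimal elements and single paths; everything else (connectedness of the cover, indecomposability) is routine given Theorem~\ref{Cov} and Section~2.
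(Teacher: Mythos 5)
This theorem is quoted from \cite{Chin2010} and the paper supplies no proof of it, so there is no internal argument to compare your write-up against; what I can say is that your reconstruction is consistent with, and essentially the intended use of, the machinery the paper does summarize (the fundamental example of a covering from an arrow weighting, Theorem \ref{Cov}, and the normality of $N(B,x_{0})$). Your reading of $\tilde{G}$ as $\pi_{1}(Q,x_{0})/N(B,x_{0})$ (rather than the literal $\pi_{1}(\tilde{Q},\tilde{x}_{0})$, which is trivial for the universal covering quiver) is surely the intended one. The homogeneity step is correct as you set it up: the spanning-tree weighting makes the degree of a closed walk at $x_{0}$ equal to its class in $\pi_{1}(Q,x_{0})$, so minimality plus normality of $N(B,x_{0})$ gives $\bar{\delta}(p_{1})=\bar{\delta}(p_{i})$, and the reduction of a general element of $B$ to single paths and minimal elements is exactly the paper's remark in Section 3. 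The universality half is also the right strategy, but it is where the remaining work sits, and you correctly flag it: you must fix the identification of the deck group $G'$ with $\pi_{1}(Q,x_{0})/F'_{\ast}\pi_{1}(Q',\tilde{x}'_{0})$ so that the monodromy map is the canonical projection (otherwise $\bar{\pi}\circ\bar{\delta}$ and the weighting $\delta'$ presenting $Q'\rightarrow Q$ could differ by an automorphism of $G'$, and $E=\mathrm{id}_{B}\rtimes\Bbbk\bar{\pi}$ would not match the given coalgebra structure on $B'\cong B\rtimes\Bbbk G'$), you need the covering $Q'$ connected for that dictionary (implicit in the Galois/connected-grading conventions of Theorem \ref{Cov}), and you still owe the verification that $E$ itself satisfies the lifting condition over $B'\subseteq\Bbbk Q'$ with group $N'/N(B,x_{0})$. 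None of these is a genuine obstruction --- they are the routine verifications carried out in \cite{Chin2010} --- so I regard your proposal as a correct outline rather than as diverging from the paper, which simply cites the result.
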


\subsection{Hopf coverings}

The idea of Hopf covering comes from the following simple observation.

\begin{lemma}
Let $B(V)$ be a Nichols algebra with a link-indecomposable Yetter-Drinfeld
module $V\in%
\genfrac{}{}{0pt}{}{\Bbbk G}{\Bbbk G}%
\mathcal{YD}.$ Let $Z$ be a normal subgroup of $G$ acting trivially on $V$.
Then $Z$ is central in $G$. Let $\bar{V}$ be a copy of $V$ graded by $\bar
{G}=G/Z$ via $\bar{V}_{gZ}=\oplus_{t\in gZ}V_{t}$ for $gZ\in\bar{G},$ and make
$\bar{V}$ a $\bar{G}$-module by factoring by $Z$. Then $\bar{V}\in%
\genfrac{}{}{0pt}{}{\bar{G}}{\bar{G}}%
\mathcal{YD}$ and $V$ is isomorphic to $\bar{V}$ as a braided vector space.
Therefore $B(V)$ and $B(\bar{V})$ are isomorphic as braided Hopf algebras and
there is a surjection $B(V)\#\Bbbk G\rightarrow B(\bar{V})\#\Bbbk\bar{G}$ on
bosonized Nichols algebras given by $b\#g\mapsto b\#gZ,$ for all $b\in B(V),$
$g\in G.$
\end{lemma}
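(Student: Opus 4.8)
The plan is to prove the five assertions in the order stated; the only one needing an idea is the first, so I begin there. Let $X=\{g\in G: V_g\neq 0\}$ be the support of $V$, which generates $G$ by link-indecomposability. Fix $z\in Z$ and $g\in X$ and choose $0\neq v\in V_g$. Triviality of the $Z$-action gives $z\cdot v=v$, while the Yetter--Drinfeld axiom over $\Bbbk G$ gives $z\cdot V_g\subseteq V_{zgz^{-1}}$; hence $v\in V_g\cap V_{zgz^{-1}}$, and since the $G$-grading of $V$ is a direct sum a nonzero homogeneous vector determines its degree, so $g=zgz^{-1}$. Thus $z$ centralizes $X$, and as $X$ generates $G$ we get $z\in Z(G)$. (One could instead deduce this from connectedness of the Gabriel quiver of $B(V)$, but the grading argument is shorter.)

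Next I would set up $\bar V$. The assignment $\bar V_{gZ}=\bigoplus_{t\in gZ}V_t$ is a well-defined $\bar G$-grading of the vector space $\bar V=V$ (this uses only normality of $Z$), and the $G$-action on $V$ factors through $\bar G$ precisely because $Z$ acts trivially. To obtain $\bar V\in{}^{\bar G}_{\bar G}\mathcal{YD}$ it suffices to check $gZ\cdot\bar V_{hZ}\subseteq\bar V_{(ghg^{-1})Z}$: writing a homogeneous element of $\bar V_{hZ}$ as $\sum_{t\in hZ}v_t$ with $v_t\in V_t$, one has $g\cdot v_t\in V_{gtg^{-1}}$ with $gtg^{-1}$ running over $g(hZ)g^{-1}=(ghg^{-1})Z$, the index set defining $\bar V_{(ghg^{-1})Z}$. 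For the braiding I would verify that the identity map $\iota\colon V\to\bar V$ is an isomorphism of braided vector spaces: for homogeneous $v\in V_g\subseteq\bar V_{gZ}$, and using that $gZ$ acts on $\bar V$ as $g$ acts on $V$, one has $c^{\bar V}(\iota v\otimes\iota w)=(gZ)\cdot w\otimes v=g\cdot w\otimes v=(\iota\otimes\iota)(c^V(v\otimes w))$; extend bilinearly. Since the Nichols algebra depends only on the underlying braided vector space, $\iota$ induces an isomorphism $B(V)\xrightarrow{\ \sim\ }B(\bar V)$ of graded braided Hopf algebras that intertwines the $G$- and $\bar G$-actions through $\pi\colon G\to\bar G$ and carries the $G$-grading of $B(V)$ onto its $\pi$-coarsening, the $\bar G$-grading of $B(\bar V)$.

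Finally I would put $\Phi=\iota\otimes\pi\colon B(V)\#\Bbbk G\to B(\bar V)\#\Bbbk\bar G$, $b\#g\mapsto\iota(b)\#gZ$, and check that it is a surjective map of Hopf algebras. Surjectivity is immediate. Multiplicativity follows from the smash-product rule $(b\#g)(b'\#g')=b(g\cdot b')\#gg'$ together with the fact that the $G$-action on $B(V)$ agrees, via $\pi$, with the $\bar G$-action on $B(\bar V)$; comultiplicativity follows from the smash-coproduct rule and the observation that the $\bar G$-coaction on $B(\bar V)$ is $(\pi\otimes\mathrm{id})$ followed by the $G$-coaction on $B(V)$. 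Units and counits are visibly preserved, so $\Phi$ is a bialgebra map, and a bialgebra map between Hopf algebras automatically commutes with the antipodes. I expect no real obstacle beyond the first paragraph: everything after it is the routine functoriality of Nichols algebras and of bosonization under passage to a quotient of the (co)acting group, once the coarsened grading and the descended action have been set in place.
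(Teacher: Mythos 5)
Your proof is correct and takes essentially the same approach as the paper: the paper's entire argument is the observation that centrality of $Z$ is equivalent to $z\cdot V_{g}=V_{zgz^{-1}}=V_{g}$ for $g$ in the support of $V$ (which generates $G$ by link-indecomposability), which is exactly your first paragraph. The remaining verifications (the $\bar G$-grading, the braided isomorphism $V\cong\bar V$, and the induced surjection of bosonizations) are left as routine in the paper and you simply spell them out.
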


\begin{proof}
We just point out that $Z$ being central is equivalent to the condition
$z.V_{g}=V_{zgz^{-1}}=V_{g}$ for all $g\in G,$ $z\in Z.$

The conclusions of the lemma hold even in the case that $V$ is not
finite-dimensional. Also if $V$ is not assumed to be link-indecomposable in
the lemma above, then we may replace "$Z$\text{ is central in }$G"$ by "$Z$ is
in the centralizer of $\{g\in G|V_{g}\neq0\}".$
\end{proof}

\begin{definition}
When $\tilde{A}=\oplus_{n\geq0}\tilde{A}_{n}=R\#\Bbbk\tilde{G}$ and
$A=\oplus_{n\geq0}A_{n}=R\#\Bbbk G$ are coradically graded pointed Hopf
algebras, which are bosonizations of the braided graded Hopf algebra $R$ as in
\cite{AS2002}, and $f:\tilde{G}\rightarrow G$ is a group surjection, we say
that the Hopf algebra map $F:\tilde{A}\rightarrow A$ is a \textit{Hopf
covering} if $F(r\#\tilde{g})=r\#f(g)$ for all $r\in R$ and $\tilde{g}.$ We
say that $\tilde{A}$ is a \textit{covering Hopf algebra} of $A,$ with covering
group $\tilde{G}.$ A universal Hopf covering is one that is universal among
Hopf coverings of $A$.
\end{definition}

The following result specifies the Hopf coverings of a bosonization of a
Nichols algebra.

\begin{theorem}
Let $B(V)\#\Bbbk G$ be the bosonization for a link-indecomposable $V\in%
\genfrac{}{}{0pt}{}{\Bbbk G}{\Bbbk G}%
\mathcal{YD}.$ Let $\tilde{G}$ the universal coalgebra covering group of
$B(V)$ and write $G=\tilde{G}/N$ where $N\vartriangleleft\tilde{G}$. The
covering Hopf algebras of $B(V)\#\Bbbk G$ are of the form $B(V)\#\Bbbk
\tilde{G}/M$ where $M\vartriangleleft\tilde{G}$ and $[N,\tilde{G}]\subseteq
M\subseteq N.$
\end{theorem}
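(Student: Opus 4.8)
The plan is to show both containments in the displayed chain of normal subgroups give genuine Hopf coverings, and that every Hopf covering arises this way. First I would recall from the Definition that a Hopf covering of $B(V)\#\Bbbk G$ is determined by a group surjection $f:\tilde{G}'\to G$ together with the map $F(r\#\tilde{g}')=r\#f(\tilde{g}')$; since the underlying coalgebra of $B(V)\#\Bbbk\tilde{G}'$ must cover $B(V)\#\Bbbk G$ as a coalgebra (this is built into being a Hopf covering, as $F$ restricts to a coalgebra covering map), and since $\tilde G$ is the \emph{universal} coalgebra covering group of $B(V)$ by the cited theorem, the covering quiver morphism factors through the universal one. Concretely, any covering group $\tilde G'$ is a quotient $\tilde G/M$ of $\tilde G$, and the coalgebra $B(V)\#\Bbbk(\tilde G/M)$ covers $B(V)\#\Bbbk G$ iff $M\subseteq N$. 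So the only real question is: for which $M\subseteq N$ is $B(V)\#\Bbbk(\tilde G/M)$ actually a \emph{Hopf} algebra mapping onto $B(V)\#\Bbbk G$, i.e.\ for which $M$ does the bosonization make sense and $F$ become a Hopf map?

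The key point, and I expect this to be the main obstacle, is identifying exactly which normal subgroups $M\vartriangleleft\tilde G$ permit $B(V)$ to descend to a Yetter--Drinfeld module over $\tilde G/M$. By the Lemma proved just above, if $Z=N/M$ is to act trivially on $V$ (which is forced, since $B(V)$'s braided structure is fixed and the $\tilde G$-action factors through $G=\tilde G/N$), then we need $N/M$ to be central in $\tilde G/M$; equivalently $[N,\tilde G]\subseteq M$. Conversely, the Lemma shows that whenever $[N,\tilde G]\subseteq M\subseteq N$, the quotient $Z=N/M$ is central in $\tilde G/M$ and acts trivially on $V$, so $\bar V:=V$ graded by $\tilde G/M$ lies in ${}^{\Bbbk(\tilde G/M)}_{\Bbbk(\tilde G/M)}\mathcal{YD}$, $B(\bar V)\cong B(V)$ as braided Hopf algebras, and the bosonization $B(V)\#\Bbbk(\tilde G/M)$ is a well-defined coradically graded pointed Hopf algebra. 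The Lemma also supplies the surjective Hopf map $B(V)\#\Bbbk(\tilde G/M)\to B(V)\#\Bbbk G$ sending $b\#(\tilde gM)\mapsto b\#(\tilde g N)$, which is exactly the map $F$ of the Definition with $f:\tilde G/M\to \tilde G/N=G$ the canonical projection. This establishes that every $M$ with $[N,\tilde G]\subseteq M\subseteq N$ yields a covering Hopf algebra.

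For the converse direction I would argue: suppose $F:\tilde A\to B(V)\#\Bbbk G$ is any Hopf covering with covering group $\tilde G'$. Since $\tilde A=B(V)\#\Bbbk\tilde G'$ restricts to a coalgebra covering of $B(V)\#\Bbbk G$, and the latter restricted to the principal block is (up to the $\#\Bbbk$-tensor bookkeeping) a coalgebra covering of $B(V)$, universality of $\tilde G$ gives a surjection $\tilde G\to\tilde G'$; write its kernel as $M$, so $\tilde G'=\tilde G/M$, and compatibility of $f$ with the given surjection onto $G$ forces $M\subseteq N$. Finally, $V$ is a module over $\tilde G'=\tilde G/M$ on which $N/M$ acts trivially (the action must reproduce the fixed braiding of $B(V)$, which is the one coming from $G$), so by the centrality observation in the proof of the Lemma, $N/M$ is central in $\tilde G/M$, i.e.\ $[N,\tilde G]\subseteq M$. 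Hence $M$ lies in the asserted interval, completing the proof. The step I would treat most carefully is the precise sense in which "$\tilde A$ restricts to a coalgebra covering'' — namely that a Hopf covering of a bosonization $R\#\Bbbk G$ induces, on the coalgebra $R$ (equivalently on the principal block), a coalgebra covering in the sense of Section 3 — since this is what licenses the appeal to the universal property of $\tilde G$; everything else is a direct application of the Lemma.
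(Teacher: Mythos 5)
Your proposal is correct and follows essentially the same route as the paper: reduce to quotients $\tilde{G}/M$ with $M\subseteq N$ via the universality of the coalgebra covering group, then invoke the preceding Lemma to show that the bosonization over $\tilde{G}/M$ exists precisely when $N/M$ is central, i.e.\ $[N,\tilde{G}]\subseteq M$. You spell out the converse direction (that every Hopf covering arises this way) more explicitly than the paper does, but the underlying argument is the same.
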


\begin{proof}
We have the universal coalgebra covering
\[
B(V)\rtimes\Bbbk\tilde{G}\rightarrow B(V)\rtimes\Bbbk G\rightarrow B(V)
\]
by \cite{Chin2010}, see Theorem \ref{Cov} and \cite{Chin2010}. So we may write
$G=\tilde{G}/N$, $N\vartriangleleft\tilde{G}.$ Now consider the set of
homomorphic images $H=\tilde{G}/M$ of $\tilde{G}$ where $M\in\mathcal{C}$, and
$\mathcal{C}$ is defined as
\begin{align*}
\mathcal{C}  &  \mathcal{=}\mathcal{\{}M\vartriangleleft\tilde{G}|N\supseteq
M\text{ and }N/M\text{ central in }\tilde{G}/M\}\\
&  =\mathcal{\{}M\vartriangleleft\tilde{G}|[N,\tilde{G}]\subseteq M\}
\end{align*}
using the group theoretic commutator. It is evident that the unique minimal
element of $\mathcal{C}$ is just $[N,\tilde{G}].$ Now for such a group $H,$ it
follows from the Lemma that $B(V)\#\Bbbk H$ is a bosonization where the action
is lifted from the action of $G,$ and the grading is inherited from the
$\tilde{G}$ grading. Thus we obtain the Hopf coverings $B(V)\#\Bbbk\tilde
{G}/M$ of $B(V)\#\Bbbk G.$
\end{proof}

\begin{corollary}
\label{uni}Let $B(V)\#\Bbbk G$ be the bosonization for link-indecomposable
$V\in%
\genfrac{}{}{0pt}{}{\Bbbk G}{\Bbbk G}%
\mathcal{YD}.$ Let $\tilde{G}$ be the universal coalgebra covering group of
$B(V)$ and write $G=\tilde{G}/N$ where $N\vartriangleleft\tilde{G}$. The
universal covering Hopf algebra of $B(V)\#\Bbbk G$ is $B(V)\#\Bbbk\tilde
{G}/[N,\tilde{G}].$
\end{corollary}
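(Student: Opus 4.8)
The plan is to deduce Corollary \ref{uni} directly from the preceding theorem by showing that among the Hopf coverings $B(V)\#\Bbbk\tilde{G}/M$ with $[N,\tilde{G}]\subseteq M\subseteq N$, the one with $M=[N,\tilde{G}]$ is universal. First I would fix notation: write $\hat{G}=\tilde{G}/[N,\tilde{G}]$ and $\hat{A}=B(V)\#\Bbbk\hat{G}$, and note that by the theorem this is indeed a covering Hopf algebra of $A=B(V)\#\Bbbk G$, with covering map $\hat{F}:\hat{A}\to A$ induced by the canonical surjection $\hat{G}\to\tilde{G}/N=G$. The central extension $1\to N/[N,\tilde{G}]\to\hat{G}\to G\to1$ records the required $Z$.

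Next I would verify the universal property. Let $F':A'\to A$ be any Hopf covering, so by the theorem $A'=B(V)\#\Bbbk(\tilde{G}/M)$ for some $M\vartriangleleft\tilde{G}$ with $[N,\tilde{G}]\subseteq M\subseteq N$, and $F'$ is induced by $\tilde{G}/M\to\tilde{G}/N=G$. Since $[N,\tilde{G}]\subseteq M$, the identity on $\tilde{G}$ descends to a group surjection $q_M:\hat{G}=\tilde{G}/[N,\tilde{G}]\to\tilde{G}/M$, and this commutes with the projections down to $G$. I would then check that $q_M$ induces a Hopf algebra map $E:\hat{A}\to A'$, $E(r\#\hat{g})=r\#q_M(\hat{g})$: this is well-defined and a coalgebra map because $\hat{A}$ and $A'$ are both biproducts built from the same braided Hopf algebra $B(V)$ with gradings and actions pulled back along the tower $\hat{G}\to\tilde{G}/M\to G$ (here one uses that the $G$-action on $V$ lifts compatibly, exactly as in the Lemma), and it is an algebra map because the smash product multiplication only sees the group multiplication, which $q_M$ respects. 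Finally $F'\circ E=\hat{F}$ since both are induced by the same surjection $\hat{G}\to G$. Uniqueness of $E$ among Hopf coverings is forced because a Hopf covering map is determined on the subcoalgebra $B(V)\#1$ (it is the identity there) and on the grouplikes it must be the unique group map lifting the identity modulo the given quotients.

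I would also record the compatibility with the coalgebra-level universal covering: the composite $\hat{A}\to A\to B(V)$ factors the coalgebra map $B(V)\rtimes\Bbbk\tilde{G}\to B(V)$, which is consistent with Theorem \ref{Cov} since $\hat{G}$ is a quotient of $\tilde{G}$; this is not strictly needed for the statement but clarifies that no coverings are lost.

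The main obstacle I expect is not the group theory (that $[N,\tilde{G}]$ is the minimal element of $\mathcal{C}$ is already noted) but the careful bookkeeping showing that $E$ is genuinely a Hopf algebra homomorphism between the two biproducts — i.e., that the Yetter--Drinfeld structures on $V$ over $\hat{G}$ and over $\tilde{G}/M$ are intertwined by $q_M$ so that the induced map respects both the smash product and the smash coproduct simultaneously. This is essentially a repeated application of the Lemma (each $M$ in $\mathcal{C}$ gives a central extension and a compatible Yetter--Drinfeld module), so the argument is routine once the lemma is invoked at the right level of generality, but it is the step that requires actual verification rather than citation.
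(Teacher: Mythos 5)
Your proposal is correct and follows the paper's intended route: the corollary is an immediate consequence of the preceding theorem's classification of covering Hopf algebras as $B(V)\#\Bbbk\tilde{G}/M$ with $[N,\tilde{G}]\subseteq M\subseteq N$, together with the observation (already made in that theorem's proof) that $[N,\tilde{G}]$ is the unique minimal element of $\mathcal{C}$. The paper leaves this deduction implicit, and your verification that the surjections $\tilde{G}/[N,\tilde{G}]\to\tilde{G}/M$ induce compatible Hopf covering maps between the biproducts is exactly the routine bookkeeping being omitted.
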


\section{Racks and Nichols algebras}

Let $B(V)$ be a Nichols algebra generated by the Yetter-Drinfeld module $V\in%
\genfrac{}{}{0pt}{}{\Bbbk G}{\Bbbk G}%
\mathcal{YD}$. We utilize the description of $B(V)$ as a subcoalgebra of the
cotensor coalgebra $C(V)$ of the bicomodule $V$. Namely the graded component
$B(V)(n)$ is the image of the quantum symmetrizer $\mathfrak{S}_{n}%
=\sum_{\sigma\in\mathbb{S}_{n}}\hat{\sigma}$ where $\symbol{94}$ denotes the
Matsumoto section $\mathbb{S}_{n}\rightarrow\mathbb{B}_{n}$. In particular,
the quadratic summand $B(V)(2)\subset V\otimes V$ is the image of $1+c.$ For a
fixed basis of $V$, we obtain an embedding $B\hookrightarrow\Bbbk Q$ where
$Q=Q_{V}$ is the $\dim V$-loop quiver with arrows labelled by basis elements
of $V$ and single vertex $1_{B(V)}$. In fact, we choose a basis of
$G$-homogeneous elements. Fixing this basis of homogeneous elements of $V$, we
thus identify $C(V)$ with the path coalgebra $\Bbbk Q.$ When $\dim\rho_{i}=1$
for all $i,$ the basis can be chosen to correspond to a union of conjugacy
classes of $G.$

As in \cite{Rosso98},\cite{Schau96}; cf. \cite{AS2002} $B(V)$ can be
constructed as both the subcoalgebra $\bigoplus\limits_{n\geq0}%
\operatorname{Im}\mathfrak{S}_{n}$ of the cotensor coalgebra $\Bbbk
Q=C(V)=k1\oplus V\oplus V\otimes V\oplus\cdots,$ and as a homomorphic image of
the tensor coalgebra $T(V)=\Bbbk^{Q}$ modulo the ideal $\bigoplus
\limits_{n\geq0}\ker\mathfrak{S}_{n}.$

\subsection{Coverings of Nichols algebras}

We assume that $V=(V,c=c^{q})=\Bbbk X$ is a Yetter-Drinfeld module with finite
rack $X=(X,\vartriangleright),$ with rack structure map $\vartriangleright
:X\times X\rightarrow X$ and $2$-cocycle $q:X\otimes X\rightarrow\Bbbk
^{\times}$ is as in \cite{AG2003a}, where\textit{\ we insist on a
one-dimensional image for }$q$. By \cite[Theorem 4.14]{AG2003a} such a braided
vector space arises as a Yetter-Drinfeld module a one-dimensional module
$\rho$ over the centralizer $G_{g}$ of an fixed chosen element $g\in G$. Fix a
basis $\{v_{x}|x\in X\}$ for $V$ where $v_{x}\in V_{x}$ for all $x.$ Note that
the assumption entails that the subpaces $V_{x}=\Bbbk v_{x},$ $x\in X$ are
one-dimensional. The braiding $c=c^{q}:V\otimes V\rightarrow V\otimes V$ is
defined by $c(v_{x}\otimes v_{y})=q(x,y)v_{y}\otimes v_{x}.$ We shall use the
same symbol for the map $c:X\times X\rightarrow X\times X,$
$c(x,y)=(x\vartriangleright y,x),$ as in \cite{GHV2011}. The Yetter-Drinfeld
module is thus $\oplus_{i}M(g_{i}$,$\rho_{i})=\oplus_{i}\Bbbk G\otimes_{\Bbbk
G_{g_{i}}}\rho_{i}.$ The group $G$ can be chosen to be finite if the subgroup
of $\Bbbk^{\times}$ generated by the $q(x,y)$ is finite and $X$ is finite, cf.
\cite[Theorem 4.14]{AG2003a}. We shall investigate groups that give rise to
braided vector spaces $(V,c^{q})$ and the Nichols algebra $B(V)$.

We follow the set-up as in \cite{GHV2011}. We have the braided vector space
$V=(\Bbbk X,c),$ and we let $\mathcal{O}$ $(=\mathcal{O}_{x,y})$ denote
$c$-orbit (of $(x,y)$) in $X\times X$ . Set ~$V_{\mathcal{O}}=\sum
_{(s,t)\in\mathcal{O}}V_{s}\otimes V_{t}$ and note that $\theta_{i}%
:=c^{i}(v_{x}\otimes v_{y}),$ $i=0,1,..m-1$ is a basis for $V_{\mathcal{O}}.$

The \textit{enveloping group} $G_{X}$ of a rack $X$ is defined to be the
quotient of the free group on generators $\{g_{x}|x\in X\}$ by the relations
\[
g_{x}g_{y}=g_{x\vartriangleright y}g_{x}%
\]
$x,y\in X.$

Let $G$ be a group with link-indecomposable $V\in%
\genfrac{}{}{0pt}{}{\Bbbk G}{\Bbbk G}%
\mathcal{YD}.$ Then $V=(\Bbbk X,c)$ for a rack $X\subset G$ where $X$ is a
union of conjugacy classes of $G$ and Supp$_{G}V=$ $X$ generates $G$. Since
the defining relations of $G_{X}$ hold in $G,$ there is a surjective group
homomorphism $G_{X}\rightarrow G.$

\begin{theorem}
\label{full}Let $B(V)$ be a Nichols algebra with a full set of quadratic
relations. Then \newline(a) $G_{X}$ is the coalgebra covering group of
$B(V)$\newline(b) $B(V)\#\Bbbk G_{X}$ is the universal covering coalgebra of
$B(V)\subset\Bbbk Q_{V}$ and \newline(c) $B(V)\#\Bbbk G_{X}\rightarrow
B(V)\#\Bbbk G$ is the universal Hopf covering of $B(V)\subset\Bbbk Q_{V}.$
\end{theorem}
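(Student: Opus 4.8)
The plan is to identify the universal coalgebra covering group $\tilde G$ explicitly with $G_X$ under the hypothesis of a full set of quadratic relations, and then deduce (b) and (c) from the general theory already established. By the theorem of Section 3, $\tilde G = \pi_1(\tilde Q,\tilde x_0)/N(B,x_0)$ where $Q=Q_V$ is the $\dim V$-loop quiver with loops labelled by $\{v_x \mid x\in X\}$, so $\pi_1(Q,x_0)$ is the free group on $\{g_x \mid x\in X\}$ and $\pi_1(\tilde Q,\tilde x_0)$ can be taken to be this free group itself (its universal covering quiver having trivial fundamental group). Thus $\tilde G = F(X)/N(B,x_0)$, the free group on $X$ modulo the normal subgroup generated by the homotopy classes arising from minimal elements of $B(V)$. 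The first step is therefore purely combinatorial: compute the relations $N(B,x_0)$ coming from degree-two minimal elements. For a nontrivial $c$-orbit $\mathcal O=\mathcal O_{x,y}$, the hypothesis of a full set of quadratic relations gives a nonzero element of $\ker(1+c)\cap V_{\mathcal O}$, i.e.\ a minimal relation among the paths $v_{c^i(x,y)}$; I would show that the path labelled by the pair $(x,y)$ is equivalent under $\sim$ to the path labelled by $c(x,y)=(x\vartriangleright y, x)$, which translates into the group relation $g_x g_y = g_{x\vartriangleright y} g_x$. Conversely, every degree-two minimal relation of $B(V)$ lives in some $V_{\mathcal O}$ and yields only relations of this shape (since all of $B(V)(2)=\operatorname{Im}(1+c)$ decomposes over orbits), so the degree-two contribution to $N(B,x_0)$ is exactly the normal closure of $\{g_x g_y (g_{x\vartriangleright y} g_x)^{-1}\}$, which is precisely the defining relator set of $G_X$.

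The second step is to argue that the higher-degree minimal relations of $B(V)$ contribute nothing new to $\tilde G$. This is the point flagged in the introduction ("the relations in the definition of $\tilde G$ of degree greater than two are superfluous"). The mechanism is that $N(B,x_0)$ only sees relations \emph{between paths}, i.e.\ the equivalence classes $[w^{-1}p^{-1}qw]$ for $p\sim q$; a relation in degree $n$ among paths of length $n$ forces $g_{x_1}\cdots g_{x_n}=g_{y_1}\cdots g_{y_n}$ in $F(X)/(\text{deg }2\text{ relations})$, and I would show that any such relation already holds in $G_X$ because the quantum symmetrizer $\mathfrak S_n$ factors through products of the braiding $c$ acting in adjacent positions, so the $G$-degree (equivalently the $G_X$-degree) of every path in the support of a homogeneous minimal element is the same element of $G_X$. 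Since each minimal element is $G$-homogeneous (as noted in the "fundamental example" discussion and in Section 4), $p\sim q$ already implies $[p]=[q]$ in $G_X$. Hence $N(B,x_0)\subseteq \ker(F(X)\to G_X)$, and combined with the reverse inclusion from the first step, $\tilde G = G_X$. This proves (a).

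For (b), once $\tilde G=G_X$ is known, the universal coalgebra covering theorem of Section 3 immediately gives that $B(V)\rtimes \Bbbk \tilde G = B(V)\rtimes\Bbbk G_X$ is the universal coalgebra covering of $B(V)\subset\Bbbk Q_V$, and it is indecomposable because $B(V)$ is (the braiding is of rack type with $X$ generating, so $V$ is link-indecomposable). Here I would also note, as the paper does elsewhere, that the smash coproduct coalgebra $B(V)\rtimes\Bbbk G_X$ coincides as a coalgebra with the bosonization $B(V)\#\Bbbk G_X$, which is why the Hopf-algebra notation is justified. For (c), apply Corollary \ref{uni}: the universal covering Hopf algebra of $B(V)\#\Bbbk G$ is $B(V)\#\Bbbk\tilde G/[N,\tilde G]$ with $G=\tilde G/N$; but by \cite[Lemma 3.4]{AFGV2011}, invoked in the introduction, $G_X=\tilde G$ is already \emph{universal} for $V$ in the sense that $Z(G_X)=\ker(G_X\to G)$ lifts the $G$-grading, which forces $N$ to be central in $\tilde G=G_X$, so $[N,\tilde G]=1$ and the universal Hopf covering is $B(V)\#\Bbbk G_X$ itself, mapping onto $B(V)\#\Bbbk G$ via $b\#g\mapsto b\#\bar g$ as in the Lemma. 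I expect the main obstacle to be the second step — rigorously showing the degree-$>2$ minimal relations are redundant — since it requires relating the combinatorial homotopy relation $\sim$ coming from $\ker\mathfrak S_n$ to the group structure, and one must be careful that a full set of quadratic relations is exactly what guarantees enough degree-two relations to collapse $F(X)$ down to $G_X$ before the higher relations are even consulted; the comparison with "many quadratic relations" in the introduction warns that this hypothesis is doing real work here.
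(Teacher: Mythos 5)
Your proposal is correct and follows essentially the same route as the paper: the quadratic minimal elements give the rack relations $g_xg_y=g_{x\triangleright y}g_x$ in $\tilde G$, the surjection $\tilde G\to G_X$ coming from the $G_X$-homogeneity of $B(V)$ (via \cite{AFGV2011}) absorbs all higher-degree relations, and (b), (c) follow from the general covering theorems together with centrality of $\ker(G_X\to G)$. The one loose spot is your justification of the key step: a nonzero element of $\ker(1+c)\cap V_{\mathcal O}$ is not itself a minimal element of the subcoalgebra $B(V)\subseteq\Bbbk Q$ (minimal elements live in $\operatorname{Im}(1+c)$), and the paper closes exactly this gap by computing the matrix of $1+c$ on $V_{\mathcal O}$ and showing that when the kernel is nonzero the image is spanned by $\theta_0+\theta_1,\dots,\theta_{m-2}+\theta_{m-1}$ and contains no single path $\theta_i$, which is what makes these elements minimal and yields $\theta_i\sim\theta_{i+1}$.
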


\begin{proof}
As mentioned above, $B(V)$ has quadratic component $B(V)(2)=\operatorname{Im}%
(1+c)\subseteq V\otimes V.$ For $v_{x}\in V_{x},v_{y}\in V_{y}$ with $x\neq
y\in X$ we have $c(x\otimes y)=q(x,y)(x\triangleright y)\otimes x.$ We need to
see that
\[
(1+c)(v_{x}\otimes v_{y})=v_{x}\otimes v_{y}+q(x,y)v_{x\triangleright
y}\otimes v_{x}%
\]
is a minimal element of Im$(1+c)$. For then we obtain the relation $\left[
v_{x}\right]  \left[  v_{y}\right]  =\left[  v_{x\triangleright y}\right]
\left[  v_{x}\right]  $ in $\pi_{1}(Q,1_{B(V)})$ per the definition of
$\tilde{Q}_{B(V)}=\frac{\pi_{1}(B(V),1)}{N(B(V),1)}.$ By \cite[Lemma
3.4]{AFGV2011} $V\in%
\genfrac{}{}{0pt}{}{\Bbbk G_{X}}{\Bbbk G_{X}}%
\mathcal{YD}$ so there is a group surjection $\tilde{G}\rightarrow G_{X}$ with
$\left[  v_{x}\right]  \longmapsto g_{x}.$ It follows that $\tilde{G}\simeq
G_{X}$

We claim that $1+c:V_{\mathcal{O}}\rightarrow V_{\mathcal{O}}$ is onto (and
thus bijective) if and only if $c^{m}(v_{i}\otimes v_{j})\neq(-1)^{m}%
(v_{i}\otimes v_{j}).$ Let $m$ be the order of $c$ in $\mathcal{O\subset
}X\mathcal{\times}X.$ The restriction of $1+c$ to $V_{\mathcal{O}}$ is given
by the $m\times m$ matrix
\[%
\begin{bmatrix}
1 & 0 & \cdots & 0 & \lambda\\
1 & 1 & 0 & \cdots & 0\\
0 & 1 & \ddots & 0{}\hspace{2pt}\cdots & 0\\
0 & \ddots & \ddots & 1 & 0\\
0 & 0 & 0 & 1 & 1
\end{bmatrix}
\]
$\allowbreak$with respect to the basis $\{\theta_{i}\}$, where $\lambda
\in\Bbbk^{\times}$ is such that $c^{m}(v_{i}\otimes v_{j})=\lambda
v_{i}\otimes v_{j}.$ One can see that the determinant is $1+\left(  -1\right)
^{m-1}$ $\lambda$, so $1+c$ is bijective if and only if $\lambda\neq\left(
-1\right)  ^{m-1}.$ When $1+c$ is not a bijection, then it is easy to see that
the $m-1$ elements
\[
\theta_{0}+\theta_{1},\theta_{1}+\theta_{2},\cdots,\theta_{m-2}+\theta_{m-1}%
\]
form a basis for the image of $1+c$ restricted to $V_{\mathcal{O}}.$ It
follows that the $\theta_{i}$ are minimal elements.
\end{proof}

\begin{remark}
When $\lambda=\left(  -1\right)  ^{m-1}$, $\ker(1+c)=\Bbbk\sum_{n=0}%
^{m-1}\left(  -1\right)  ^{n}\theta_{n}\ $as noted in $\cite{GHV2011}.$ When
$\lambda\neq\left(  -1\right)  ^{m-1}$ then $\ker(1+c)|_{\mathcal{O}}=0.$
\end{remark}

\begin{remark}
The universal covering Hopf algebra was seen to be $B(V)\#\tilde{G}/[\tilde
{G},N]$ in Theorem \ref{uni}. Therefore $\tilde{G}/[\tilde{G},N]\simeq G_{X}$
for all choices of $N$ and it follows that $[\tilde{G},N]=\ker(\tilde
{G}\rightarrow G_{X}).$ In case $N$ is central, we get the result
$G_{X}=\tilde{G}$ as in the conclusion of the Theorem. But if there is not a
full set of quadratic relations, we may still get $G_{X}=\tilde{G},$ as can be
shown for Nichols algebras of finite Cartan type, e.g. last example in
$\ref{nonabelian ex}.$
\end{remark}

\section{Examples}

\subsection{rank 1}

Let $G=C_{m}=<K>$ be the cyclic group of order $m$ and let $q$ be an $m^{th}$
root of $1$. Let $H$ be the Hopf algebra with generators $E,$ $K$, where $K$
is group-like and $E$ is $(K,1)$ skew-primitive, and with relations $KE=qEK,$
$E^{m}=0.$ Then $H$ is the bosonization $B(\Bbbk E)\#\Bbbk C_{m}$ where
$B(\Bbbk E)=\Bbbk\lbrack E]/(E^{m})$ with $G$-comodule structure given by
$\delta(E)=K\otimes E$ and $G$-module structure given by $KE=qE$. Then $\Bbbk
E\in%
\genfrac{}{}{0pt}{}{\Bbbk G}{\Bbbk G}%
\mathcal{YD}$ and $B(\Bbbk E)\#C_{m}$ is a Hopf algebra. Its quiver is a
directed cycle $Q_{m}$ of length $m.$ The finite Hopf coverings of $B(\Bbbk
E)\#\Bbbk C_{m}$ of have underlying coalgebra coverings $B(\Bbbk
E)\#C_{n}\rightarrow$ $B(\Bbbk E)\#C_{m}$ with $m|n.$ Setting $n=\infty$ (so
$G=<g>$ is infinite cyclic) results in the universal covering Hopf algebras
whose quiver is of type $\mathbb{A}_{\infty}^{\infty}$ with unidirectional
arrows. These Hopf algebras appear in \cite{DIN2013}.

One may also consider the path coalgebras of the cyclic quivers $Q_{n},$ and
quivers of type $\mathbb{A}_{\infty}^{\infty}$ with unidirectional arrows for
$m\in\mathbb{Z}^{+}\cup\{\infty\}.$ As in \cite{CiRo2002} the path coalgebras
can be furnished with a Hopf algebra structure depending on an $m$th root of
unity $q$ where where $m|n$ or $n=\infty$. The Hopf coverings for fixed $m$
(and an $m$th root of 1) are $\Bbbk Q_{n}\rightarrow\Bbbk Q_{m}$ where $m|n$
or $n=\infty.$ The Hopf algebras in the previous paragraph are the
bosonizations of Nichols algebras for these infinite dimensional Hopf algebras.

\subsection{Fomin-Kirillov algebras \label{FK Ex}}

Let $X$ be the rack of transpositions of $\mathbb{S}_{n},$ $n>2$ and consider
the versions of $V=(\Bbbk X,c^{q})$ for the cocycles as in \cite{GHV2011}.
Then the quadratic version $\hat{B}_{2}(V)$ is the algebra from \cite{FK99}%
,\cite{MiS2000}; cf. \cite{AS2002}. It is known that $B(V)=\hat{B}_{2}(V)$ is
finite dimensional for $n\leq5,$ but this problem has been open for $n>5$ for
more than a decade.

One can enumerate the orbits (including orbits of size 1) of $c$ on $X\times
X$ as follows:%

\[%
\begin{tabular}
[c]{|c|c|}\hline
Orbits of size & \#orbits\\\hline
1 & $\binom{n}{2}=d$\\\hline
2 & $\frac{1}{2}\binom{n}{2}\binom{n-2}{2}$\\\hline
3 & $2\binom{n}{3}$\\\hline
\end{tabular}
\]

The total number of $c$-orbits is
\begin{align*}
f(n)  &  =\frac{n\left(  n-1\right)  }{2}+\frac{n\left(  n-1\right)  \left(
n-2\right)  \left(  n-3\right)  }{8}+\frac{n\left(  n-1\right)  \left(
n-2\right)  }{3}\\
&  =\allowbreak\frac{1}{24}n\left(  3n^{3}-10n^{2}+21n-14\right)  .
\end{align*}

The number of orbits in excess of $\binom{d}{2}$ is%
\begin{align*}
f(n)-\binom{d}{2}  &  =\frac{1}{24}n\left(  3n^{3}-10n^{2}+21n-14\right) \\
&  -\allowbreak\frac{1}{8}n\left(  n-1\right)  \left(  n-2\right)  \left(
n+1\right) \\
&  =\allowbreak-\frac{1}{6}n\left(  n-1\right)  \left(  n-5\right)
\end{align*}

There are fewer quadratic relations (=\#orbits) than $\binom{d}{2}$ (recall
$d=\binom{n}{2})$ for $n>5.$ There is a full set of quadratic relations, but
not "many" in the sense of \cite{GHV2011}.%

\[%
\begin{tabular}
[c]{|c|c|c|}\hline
$n$ & \#orbits in $X\times X=f(n)$ & \#orbits$-\binom{d}{2}$\\\hline
$3$ & $5$ & $2$\\\hline
$4$ & $17$ & $2$\\\hline
$5$ & $45$ & $0$\\\hline
$6$ & $100$ & $-5$\\\hline
\end{tabular}
\]

\subsection{Nonabelian group type\label{nonabelian ex}}

We adopt the list of some racks from e.g. \cite{GHV2011},\cite{GVZoo}%
,\cite{MiS2000},\cite{HLV2012}. We will consider the racks $\mathcal{A}%
$,$\mathcal{B}$,$\mathcal{C}$,$\mathcal{T}$ and the affine racks
$\mathrm{Aff}(5,2),\mathrm{Aff}(5,3),\mathrm{Aff}(7,5),\mathrm{Aff}(7,3)$ with
$2$-cocyles as in the references which result in finite-dimensional Nichols algebras.

Let $\mathcal{S}_{n}$ denote the rack of transpositions in $\mathbb{S}_{n},$
$n\geq3$ (as above). Let $\mathcal{B}$ be the rack of $4$-cycles in
$\mathbb{S}_{4}.$ Let $\mathcal{D}_{4}$ denote the rack of $4$ reflections
(transpositions) in the dihedral group $\mathbb{D}_{4}$ (order $8$). Hence
$\mathcal{D}_{3}=\mathcal{S}_{3}$, $\mathcal{A=S}_{4}$ and $\mathcal{C}%
=\mathcal{S}_{5}.$ Let $d=\dim V$. All but the last two rows have
indecomposable racks corresponding to irreducible Yetter-Drinfeld modules. All
but the last row have Nichols algebras with a full set of quadratic relations.
The last two rows have decomposable racks. The example over $\mathbb{D}_{4}$
is from \cite[Example 6.5]{MiS2000}; since the center of $D_{4}$ acts
trivially on the Yetter -Drinfeld module $V,$ the braiding reduces to the
Klein $4$-group $\mathbb{V}$ and the $\mathbb{D}_{4}$ bosonization is a double
cover of the smaller Hopf algebra over $\mathbb{V}$.

The two newer examples in \cite[ Prop. 32, 36]{HLV2012} (over $\mathcal{D}%
_{3}$ and $\mathcal{T}$ in rows 2 and 5 in table below, respectively) do have
a full set of quadratic relations, but do not have relations of form $x^{2}$.

In the last row, the Nichols algebra of type finite Cartan type of rank $2$ is
seen to have no quadratic relations (where the root of unity has order
$>$%
$2$) because the Serre relations are cubic. It can be shown that $G_{X}%
=\tilde{G}$ and is free abelian for Nichols algebras of finite Cartan type.

The computation of the enveloping groups and there centers was done with the
aid of GAP, or done by hand. The fact concerning $G_{\mathbb{S}_{n}}$and its
center are from \cite[Prop. 3.2]{AFGV2011}.%

\[%
\begin{tabular}
[c]{|r|r|r|r|r|r|}\hline
Rack $X$ & rank $d$ & $Z(G_{X})$ & $G_{X}/Z(G_{X})$ & $\#$orbits &
\#QR\\\hline
$\mathcal{S}_{n}$ & $\binom{n}{2}$ & $C_{\infty}$ & $\mathbb{S}_{n}$ & $f(n) $
& $f(n)$\\\hline
$\mathcal{D}_{3}$ & $3$ & $C_{\infty}$ & $\mathbb{S}_{n}$ & $5$ & $2$\\\hline
$\mathcal{B}$ & $6$ & $C_{\infty}$ & $\mathbb{S}_{4}$ & $17$ & $17$\\\hline
$\mathcal{T}$ & $4$ & $C_{\infty}\times C_{2}$ & $\mathbb{A}_{4}$ & $8$ & $8
$\\\hline
$\mathcal{T}$ & $4$ & $C_{\infty}\times C_{2}$ & $\mathbb{A}_{4}$ & $8$ & $4
$\\\hline
$\mathrm{Aff}(5,2)$ & $5$ & $C_{\infty}$ & $C_{5}\rtimes C_{4}$ & $10$ & $10
$\\\hline
$\mathrm{Aff}(5,3)$ & $5$ & $C_{\infty}$ & $C_{5}\rtimes C_{4}$ & $10$ & $10
$\\\hline
$\mathrm{Aff}(7,3)$ & $7$ & $C_{\infty}$ & $C_{7}\rtimes C_{6}$ & $21$ & $21
$\\\hline
$\mathrm{Aff}(7,5)$ & $7$ & $C_{\infty}$ & $C_{7}\rtimes C_{6}$ & $21$ & $21
$\\\hline
$\mathcal{D}_{4}$ & $4$ & $C_{\infty}\times C_{\infty}\times C_{2}$ &
$C_{2}\times C_{2}$ & $4$ & $4$\\\hline
rank 2 & $2$ & $C_{\infty}\times C_{\infty}$ &  & $2$ & $0$\\\hline
\end{tabular}
\]

\bibliographystyle{abbrv}
\bibliography{chinsrefs2014}

\end{document}